\newtheorem{theorem}{Theorem}[section]
\newtheorem{lemma}[theorem]{Lemma}
\newtheorem{definition}[theorem] {Definition}
\newcommand{\Set}{\mathrm{Set}}
\newcommand{\Cat}{\mathrm{Cat}}
\newcommand*{\Rel}{\mathrm{Rel}}
\newcommand{\op}{\mathrm{op}}
\newcommand*{\1}{\mathbb 1}
\newcommand*{\C}{\mathbb C}
\newcommand*{\D}{\mathbb D}
\newcommand*{\todi}{\rlap{\ \ \raisebox{.5px}{$\shortmid$}}\rightarrow}
\newcommand*{\doublerightarrow}[2]{\mathrel{
  \settowidth{\@tempdima}{$\scriptstyle#1$}
  \settowidth{\@tempdimb}{$\scriptstyle#2$}
  \ifdim\@tempdimb>\@tempdima \@tempdima=\@tempdimb\fi
  \mathop{\vcenter{
    \offinterlineskip\ialign{\hbox to\dimexpr\@tempdima+1em{##}\cr
    \rightarrowfill\cr\noalign{\kern.5ex}
    \rightarrowfill\cr}}}\limits^{\!#1}_{\!#2}}}
\newcommand*{\triplerightarrow}[1]{\mathrel{
  \settowidth{\@tempdima}{$\scriptstyle#1$}
  \mathop{\vcenter{
    \offinterlineskip\ialign{\hbox to\dimexpr\@tempdima+1em{##}\cr
    \rightarrowfill\cr\noalign{\kern.5ex}
    \rightarrowfill\cr\noalign{\kern.5ex}
    \rightarrowfill\cr}}}\limits^{\!#1}}}
\newcommand{\colim@}[2]{\vtop{\m@th\ialign{##\cr
    \hfil$#1\operator@font lim$\hfil\cr
    \noalign{\nointerlineskip\kern1.5\ex@}#2\cr
    \noalign{\nointerlineskip\kern-\ex@}\cr}}}
\newcommand{\colim}{%
  \mathop{\mathpalette\colim@{\rightarrowfill@\textstyle}}\nmlimits@
}
\newcommand\eA{\mathcal A}
\newcommand\eD{\mathcal D}
\newcommand\eV{\mathcal V}
\newcommand\eH{\mathcal H}
\newcommand\eE{\mathcal E}
\newcommand\A{\mathbb A}
\renewcommand\H{\mathbb H}
\newcommand\V{\mathbb V}
\newcommand\T{\mathbb T}
\newcommand\E{\mathbb E}
\newcommand\Span{\mathtt{Span}}
\author{Flavien Breuvart}
\affil
{Laboratoire LIPN, CNRS UMR 7030,Universit\'e Sorbone Paris Nord, France\\
\texttt{breuvart@lipn.univ-paris13.fr}
}
\begin{document}

\title{Reflection on the enrichments of double categories}
\maketitle

This version an extended but early draft : with hardly any references, explanations and discussions, lacking some definitions and examples, and, above all, without formal proofs. To be frank, some proofs have only be roughly checked on papers, and there may well be mistakes.

A more definitive version should follow.

\section{Preliminaries}

\begin{definition}[Double category]\ \\
  A double category $\D=(\A,\V,\H,\odot,U)$ is given by :
  \begin{itemize}
  \item a category $\V$ called vertical category,
  \item a category $\A$ called arrow category,
  \item two functors $S,T : \A\rightarrow \V$ called source and target, objects $u\in|\A|$ of source $X$ and target $Y$ are denoted as crossed arrow $u:X\todi Y$, 
  \item a functor $U : \V \rightarrow \A$ such that $U;S = U;T = id^{\Cat}_{\V}$,
  \item a functor $(\_\odot\_) : T\vee S \rightarrow \A$ such that $(\_\odot\_);S=S_{|T}S$ and $(\_\odot\_);T=T_{|S;T}$, where $T\vee S$ is the pullback of $S$ and $T$, $T_{|S}:T\vee S \rightarrow \A$ is the pullback of $T$ along $S$ and $S_{|T}:T\vee S \rightarrow \A$ is the pullback of $S$ along $T$,
  \item with natural isomorphisms :
    \begin{align*}
      \lambda &: \phi \simeq U_X\odot \phi &
      \rho &: \phi \simeq \phi\odot U_X &
      \alpha &: \phi\odot(\psi\odot\zeta) \simeq (\phi\odot\psi)\odot\zeta
    \end{align*}
    \item such that $S(\lambda)$, $S(\rho)$, $S(\alpha)$, $T(\lambda)$, $T(\rho)$ and $T(\alpha)$ are all identities,
    \item with the coherence axioms :
    \begin{align*}
      id\odot\lambda &= \alpha;(\rho\odot id) &
      \alpha;\alpha &= (id\odot\alpha);\alpha;(\alpha\odot id)
    \end{align*}      
  \end{itemize}
  We denote morphisms of $\A(u,v)$ of source $f$ and target $g$ by a square : 
    \begin{center}
      \begin{tikzpicture}
        \node (X) at (0,1.5) {$X$};
        \node (Y) at (1.5,1.5) {$Y$};
        \node (V) at (0,0) {$V$};
        \node (W) at (1.5,0) {$W$};
        \draw[->] (X) to node [sloped] {$\shortmid$} node[auto] {{\scriptsize $u$}}  (Y);
        \draw[->] (X) to node[auto] {{\scriptsize $f$}}   (V);
        \draw[->] (Y) to node[auto] {{\scriptsize $g$}}  (W);
        \draw[->] (V) to node [sloped] {$\shortmid$} node[auto] {{\scriptsize $u$}} (W);
        \node () at (.75,.75) {$\phi$};
      \end{tikzpicture}
    \end{center}
    Vertical compositions of such a square correspond to composition in $\A$ and horizontal compositions of such squares correspond to $\odot$-composition. up to $\lambda$, $\rho$ and $\alpha$ isomorphisms, the way we compose the diagrams do not matter.

    In addition, we define a few additional categories :
  \begin{itemize}
  \item we call horizontal category the category $\H$ :
    \begin{itemize}
    \item which objects are those of the vertical category : $|\H|=|\V|$,
    \item which morphisms are the objects of the arrow category : 
      $$\H(X,Y) = \{u\in|\A| \mid S(u)= X,\ T(u)= Y\}$$ 
    \item and which unit and composition are given by $U$ and $\odot$ on objects.
    \end{itemize}
  \item we call diagonal category the category $\T$ : 
    \begin{itemize}
    \item which objects are those of the vertical category : $|\H|=|\V|$,
    \item which morphisms are the tuples 
      $$\T(X,Y) :=\left\{(f,u,\phi,\psi) \middle | \begin{matrix} f\in\V(X,Y), u\in\T(X,Y), \phi\in\A(id_X^\H,u), \psi\in\A(u,id^\H_Y),\\ S(\phi)=id_X^\V,\ T(\psi)=id_Y^\V,\ T(\phi)=S(\psi)= f \end{matrix} \right\}$$
    \item and which unit and composition are given by :
      $$ id^\T_{X}:= (id^\V_X,U_X,U_{id^\V},U_{id^\V}) $$
      $$(f,u,\phi,\psi);(g,v,\phi',\psi') := (f;g,\ u\odot v;\ ((\phi;U_g)\odot(id^\A_v;\phi')),\ ((\psi;id^\A_u)\odot(U_f;\psi')) $$
    \end{itemize}
  \item we call globular horizontal category the 2-category $\H^2$ which objects and morphisms are those of $\H$ and which 2-morphisms are
    $$\H^2(u,v):= \{\phi\in \A(u,v) \mid S(\phi) = id^\V_{S(u)},\ T(\phi)=id^\V_{T(u)}\}$$
  \item we call globular vertical category the 2-category $\V^2$ the category with the which objects and morphisms are those of $\V$ and which 2-morphisms are
    $$\V^2(f,g):= \{\phi\in \A(id^\H_{S(f)},id^\H_{T(f)}) \mid S(\phi) = f,\ T(\phi)=g\}$$
  \item we call transversal double category $\D^t$ the category obtained by inverting the horizontal and vertical category, concretely :
    \begin{itemize}
    \item $\V^t =\H^t$
    \item $|\A^t| = \{(X,Y,f)\mid X,Y\in|\V|, f\in\V(X,Y)\}$,
    \item $\A^t((X,Y,f),(X',Y',g)) = \{(u,v,\phi) \mid u,v\in|\A|, \phi\in\A(u,v), S(\phi)= f, T(\phi)= g\}$,
    \item $id^{\A^t}_{X,Y,f}= (id_X,id_Y,U_f)$, $(u,v,\phi);^t(u',v',\psi) = (u\odot u',v\odot v',\phi\odot\psi)$,
    \item $U_u= (u,u,id^\A_u)$, $(u,v,\phi)\odot(u',v',\psi) = (u;u',v;v',\phi;\psi)$,
    \item $S^t(u,v,\phi)=u $, $T^t(u,v,\phi)=v$
    \end{itemize}
  \end{itemize}
  \item we call vertically opposite double category $\D^{vop}$ the double category obtained by taking the opposite of $\V^{op}$ of $\V$ and $\A^\op$ of $\A$.
\end{definition}

\begin{definition}[Monoidal double category]
  A double category $\eD =(\eA,\eH,\eV,U,\odot,\otimes,\1)$ is monoidal when :
  \begin{itemize}
  \item $(\eV,\otimes^\eV,\1^\eV)$ is monoidal,
  \item $(\eA,\otimes^\eA,\1^\eA)$ is monoidal,
  \item $S,T,U,\odot$ are strict monoidal functors,
  \end{itemize}
\end{definition}

\section{Existing concepts of enrichments or restrictions}

\begin{definition}[Externaly enriched double category]
  Let $\mathcal C$ a monoidal category with all pullbacks.

  An externally $\C$-enriched category is an internally category of the category of $\C$-enriched categories, i.e., it is given by :
  \begin{itemize}
  \item a $\C$-enriched category $\V$,
  \item a $\C$-enriched category $\A$,
  \item two $\C$-enriched functors $S,T : \A\rightarrow \V$, 
  \item a $\C$-enriched functor $U : \V \rightarrow \A$ such that $U;S = U;T = id^{\C\Cat}_{\V}$,
  \item a $\C$-enriched functor $(\_\odot\_) : T\vee S \rightarrow \A$ such that $(\_\odot\_);S=S_{|T}S$ and $(\_\odot\_);T=T_{|S;T}$, where $T\vee S$ is the pullback of $S$ and $T$, $T_{|S}:T\vee S \rightarrow \A$ is the pullback of $T$ along $S$ and $S_{|T}:T\vee S \rightarrow \A$ is the pullback of $S$ along $T$,
  \item with the same isomorphisms and coherence axioms
  \end{itemize}
\end{definition}

\begin{definition}[Thin double category]\ \\
  A thin (also called posetal) double category $\D$ is a double categories such that, for any given $f,g,u,v$, there is at most one square $\eta\in\A(u,v)$ such that $S(\eta)=f$ and $T(\eta)=g$.
\end{definition}

\begin{definition}[Inclusion double category]\ \\
  An inclusion double category $\D$ is a double categories which vertical category is included in the diagonal category :
  $$ \forall f\in\V(X,Y), \exists u,\phi,\psi,\ (f,u\phi,\psi)\in\T(X,Y) $$
\end{definition}

\begin{definition}[Framed bicategory]\ \\
  A framed bicategory $\D$ is an inclusion double category $\D$ which vertical posit $\D^{vop}$ is also an inclusion category.
\end{definition}

%
%
%

\section{internally enriched double categories}

As oppose to the external enrichments that internalize the notion of category in the 2-category of $\C$-enriches category, we are trying to internalized the notion of $|\C|$-enriched category into the 2-category of categories.

Intuitively, a $\mathcal C$-enriched category is given by a set $\V$ of objects, a function $\E : \V\times \V\rightarrow |\mathcal C|$, a unit $U_X: \mathcal C(\1,\E(X,X))$ and a composition $\odot_{x,y,z}:\mathcal C(\E(X,Y)\otimes \E(Y,Z), \E(X,Z))$, plus standard coherence diagrams.

The immediate generalization is to replace set by categories and functions by functors. However, such a notion is too restrictive due to the strictness of $\E$. Indeed, such a definition require $\E(id_X,id_Y)=id_{\E(X,Y)}$, which, intuitively, means that the globular horizontal category will be trivial. Similarly, the strictness on the composition do not make sense either.

However, one can use a lax functor for $\E$ and resolve this issue. Additionally, since we are working with double categories, we know that lax functors make way more sense in the context of double categories rather that 2-categories, thus we will require $\mathcal C$ to be a double category (that we call $\eD$). Finally, having access to a new ``kind'' of morphisms in $\eD$, we can distinguish those used as target of $\E$ and those used for the unit $U$ and composition $\odot$, allowing a richer framework.

\begin{definition}[horizontal enrichement]\ \\
  Let $\eD =(\eA,\eH,\eV,U^\eD,\odot^\eD,\otimes^\eD,\1^\eD)$ be a monoidal double category.

  An internally $\eD$-enriched double category $\D$ is given by :
  \begin{itemize}
  \item a vertical category $\V$
  \item a lax functor :
    $$ \E : \V\times\V \rightarrow \eH $$
    whose lax-ness spells out as globular cells :
    \begin{center}
      \hspace{-1cm}
      \begin{tikzpicture}
        \node (X) at (0,2) {$\E(X,Y)$};
        \node (B) at (2,2) {$\E(X,Y)$};
        \node (Z) at (2,0) {$\E(X,Y)$};
        \node (Y) at (0,0) {$\E(X,Y)$};
        \draw[double] (X) to node [sloped] {$\shortmid$} (B);
        \draw[double] (X) to  (Y);
        \draw[double] (B) to (Z);
        \draw[->] (Y) to node [sloped] {$\shortmid$} node[below] {{\scriptsize $\E(id_X^\V,id_Y^\V)$}} (Z);
        \node () at (1,1) {$I^\E_{X,Y}$};
      \end{tikzpicture}
      \begin{tikzpicture}
        \node (X) at (0,2) {$\E(X_1,Y_1)$};
        \node (X') at (2.5,2) {$\E(X_2,Y_2)$};
        \node (B) at (5,2) {$\E(X_3,Y_3)$};
        \node (Z) at (5,0) {$\E(X_3,Y_3)$};
        \node (Y) at (0,0) {$\E(X_1,Y_1)$};
        \draw[->] (X) to node [sloped] {$\shortmid$} node[above] {{\scriptsize $\E(f,g)$}} (X');
        \draw[->] (X') to node [sloped] {$\shortmid$} node[above] {{\scriptsize $\E(f',g')$}} (B);
        \draw[double] (X) to  (Y);
        \draw[double] (B) to (Z);
        \draw[->] (Y) to node [sloped] {$\shortmid$} node[below] {{\scriptsize $\E(f;f',g;g')$}} (Z);
        \node () at (2.5,1) {$C^\D_{f,g,f',g'}$};
      \end{tikzpicture}
    \end{center}
  \item with
    $$ U^\D_X \in \eV(\1,\E(X,X)) \quad\quad U^\D_f \in \eA(id_\1,\E(f,f))$$
    $$ (\_\odot^\D\_) \in \eV(\E(X,Y)\otimes \E(Y,Z) , \E(X,Z)) $$
    $$ (\_\odot^\D\_) \in \eA(\E(f,g)\otimes \E(g,h) , \E(f,h)) $$
    ``functorial'' in in the sens that (in $\E$) :
    \begin{center}
      \hspace{-1cm}
      \begin{tikzpicture}
        \node (X) at (0,2) {$\1$};
        \node (B) at (2,2) {$\1$};
        \node (Z) at (2,0) {$\E(X',X')$};
        \node (Y) at (0,0) {$\E(X,X)$};
        \draw[double] (X) to node [sloped] {$\shortmid$} (B);
        \draw[->] (X) to node[left] {{\scriptsize $U^\D_X$}} (Y);
        \draw[->] (B) to node[right] {{\scriptsize $U^\D_{X'}$}} (Z);
        \draw[->] (Y) to node [sloped] {$\shortmid$} node[below] {{\scriptsize $\E(f,f)$}} (Z);
        \node () at (1,1) {$U^\D_f$};
      \end{tikzpicture}
      \begin{tikzpicture}
        \node (X) at (0,2) {$\E(X,Y)\otimes\E(Y,Z)$};
        \node (B) at (5,2) {$\E(X',Y')\otimes\E(Y',Z')$};
        \node (Z) at (5,0) {$\E(X',Z')$};
        \node (Y) at (0,0) {$\E(X,Z)$};
        \draw[->] (X) to node [sloped] {$\shortmid$} node[above] {{\scriptsize $\E(f,g)\otimes\E(g,h)$}} (B);
        \draw[->] (X) to node[left] {{\scriptsize $\odot^\D_{X,Y,Z}$}} (Y);
        \draw[->] (B) to node[right] {{\scriptsize $\odot^\D_{X',Y',Z'}$}} (Z);
        \draw[->] (Y) to node [sloped] {$\shortmid$} node[below] {{\scriptsize $\E(f,h)$}} (Z);
        \node () at (2.5,1) {$\odot^\D_{f,g,h}$};
      \end{tikzpicture}
  \end{center}
    and the lax-functorial equalities :
    \begin{center}
      \begin{tikzpicture}
        \node (X) at (0,3) {$\1$};
        \node (B) at (2,3) {$\1$};
        \node (Z) at (2,0) {$\E(X,X)$};
        \node (Y) at (0,0) {$\E(X,X)$};
        \draw[double] (X) to node [sloped] {$\shortmid$} (B);
        \draw[->] (X) to node[left] {{\scriptsize $U^\D_X$}} (Y);
        \draw[->] (B) to node[right] {{\scriptsize $U^\D_{X}$}} (Z);
        \draw[->] (Y) to node [sloped] {$\shortmid$} node[below] {{\scriptsize $\E(id^\V_X,id^\V_X)$}} (Z);
        \node () at (1,1.5) {$U^\D_{id^\V_X}$};
        \node () at (3,1) {$=$};
      \end{tikzpicture}
      \begin{tikzpicture}
        \node (X) at (0,3) {$\1$};
        \node (B) at (2,3) {$\1$};
        \node (Z') at (2,1.5) {$\E(X,X)$};
        \node (Y') at (0,1.5) {$\E(X,X)$};
        \node (Z) at (2,0) {$\E(X,X)$};
        \node (Y) at (0,0) {$\E(X,X)$};
        \draw[double] (X) to node [sloped] {$\shortmid$} (B);
        \draw[->] (X) to node[left] {{\scriptsize $U^\D_X$}} (Y');
        \draw[->] (B) to node[right] {{\scriptsize $U^\D_{X}$}} (Z');
        \draw[double] (Y') to node [sloped] {$\shortmid$} (Z');
        \draw[double] (Y') to (Y);
        \draw[double] (Z') to (Z);
        \draw[->] (Y) to node [sloped] {$\shortmid$} node[below] {{\scriptsize $\E(id^\V_X,id^\V_X)$}} (Z);
        \node () at (1,2.25) {$U^\eD_{U^\D_X}$};
        \node () at (1,0.75) {$I^\D_X$};
      \end{tikzpicture}\\
      \hspace*{-2cm}
      \begin{tikzpicture}
        \node (A) at (0,3) {$\E(X,Y)\otimes\E(Y,Z)$};
        \node (A') at (0,1.5) {$\E(X,Y)\otimes\E(Y,Z)$};
        \node (B) at (5,3) {$\E(X,Y)\otimes\E(Y,Z)$};
        \node (B') at (5,1.5) {$\E(X,Y)\otimes\E(Y,Z)$};
        \node (Z) at (5,0) {$\E(X,Z)$};
        \node (Y) at (0,0) {$\E(X,Z)$};
        \draw[double] (A) to node [sloped] {$\shortmid$}  (B);
        \draw[->] (A') to node [sloped] {$\shortmid$} node[above] {{\scriptsize $\E(id^\V_X,id^\V_Y)\otimes\E(id^\V_Y,id^\V_Z)$}} (B');
        \draw[double] (A) to (A');
        \draw[double] (B) to (B');
        \draw[->] (A') to node[left] {{\scriptsize $\odot^\E_{X,Y,Z}$}} (Y);
        \draw[->] (B') to node[right] {{\scriptsize $\odot^\E_{X,Y,Z}$}} (Z);
        \draw[->] (Y) to node [sloped] {$\shortmid$} node[below] {{\scriptsize $\E(id^\V_X,id^\V_Z)$}} (Z);
        \node () at (2.5,2.25) {$I^\D_{X,Y}\otimes I^\D_{X,Y}$};
        \node () at (2.5,0.75) {$\odot^\D_{id^\V_X,id^\V_Y,id^\V_Z}$};
      \end{tikzpicture}\\
      \hspace*{2cm}
      \begin{tikzpicture}
        \node () at (-2,1.5) {$=$};
        \node (X) at (0,3) {$\E(X,Y)\otimes\E(Y,Z)$};
        \node (B) at (5,3) {$\E(X,Y)\otimes\E(Y,Z)$};
        \node (Z') at (5,1.5) {$\E(X,Z)$};
        \node (Y') at (0,1.5) {$\E(X,Z)$};
        \node (Z) at (5,0) {$\E(X,Z)$};
        \node (Y) at (0,0) {$\E(X,Z)$};
        \draw[double] (X) to node [sloped] {$\shortmid$}  (B);
        \draw[->] (X) to node[left] {{\scriptsize $\odot^\D_{X,Y,Z}$}} (Y');
        \draw[->] (B) to node[right] {{\scriptsize $\odot^\D_{X,Y,Z}$}} (Z');
        \draw[double] (Y') to node [sloped] {$\shortmid$} (Z');
        \draw[double] (Y') to (Y);
        \draw[double] (Z') to (Z);
        \draw[->] (Y) to node [sloped] {$\shortmid$} node[below] {{\scriptsize $\E(id^\V_X,id^\V_Z)$}} (Z);
        \node () at (2.5,2.25) {$U^\eD_{\odot^\D_{X,Y,Z}}$};
        \node () at (2.5,0.75) {$I^\D_{X,Y}$};
      \end{tikzpicture}\\
      \begin{tikzpicture}
        \node (A) at (0,3) {$\1$};
        \node (C) at (3,3) {$\1$};
        \node (X) at (0,0) {$\E(X_1,X_1)$};
        \node (Z) at (3,0) {$\E(X_3,X_3)$};
        \draw[double] (A) to node [sloped] {$\shortmid$} (C);
        \draw[->] (A) to node[left] {{\scriptsize $U^\D_{X_1}$}} (X);
        \draw[->] (C) to node[right] {{\scriptsize $U^\D_{X_3}$}} (Z);
        \draw[->] (X) to node [sloped] {$\shortmid$} node[below] {{\scriptsize $\E(f;g,f;g)$}} (Z);
        \node () at (1.5,1.5) {$U^\D_{f;g}$};
        \node () at (4,1.5) {$=$};
      \end{tikzpicture}
      \begin{tikzpicture}
        \node (A) at (0,3) {$\1$};
        \node (B) at (2,3) {$\1$};
        \node (C) at (4,3) {$\1$};
        \node (X) at (0,1.5) {$\E(X_1,X_1)$};
        \node (Y) at (2,1.5) {$\E(X_2,X_2)$};
        \node (Z) at (4,1.5) {$\E(X_3,X_3)$};
        \node (X') at (0,0) {$\E(X_1,X_1)$};
        \node (Z') at (4,0) {$\E(X_3,X_3)$};
        \draw[double] (A) to node [sloped] {$\shortmid$} (B);
        \draw[double] (B) to node [sloped] {$\shortmid$} (C);
        \draw[->] (A) to node[left] {{\scriptsize $U^\D_{X_1}$}} (X);
        \draw[->] (B) to node[right] {{\scriptsize $U^\D_{X_2}$}} (Y);
        \draw[->] (C) to node[right] {{\scriptsize $U^\D_{X_3}$}} (Z);
        \draw[->] (X) to node [sloped] {$\shortmid$} node[below] {{\scriptsize $\E(f,f)$}} (Y);
        \draw[->] (Y) to node [sloped] {$\shortmid$} node[below] {{\scriptsize $\E(g,g)$}} (Z);
        \draw[double] (X) to (X');
        \draw[double] (Z) to (Z');
        \draw[->] (X') to node [sloped] {$\shortmid$} node[below] {{\scriptsize $\E(f;g,f;g)$}} (Z');
        \node () at (1,2.25) {$U^\D_f$};
        \node () at (3,2.25) {$U^\D_g$};
        \node () at (2,.75) {$C^\D_{f,f,g,g}$};
      \end{tikzpicture}\\
      \hspace{-1cm}
      \begin{tikzpicture}
        \node (A) at (0,3) {$\E(X_1,Y_1)\otimes\E(Y_1,Z_1)$};
        \node (B) at (4,3) {$\E(X_2,Y_2)\otimes\E(Y_2,Z_2)$};
        \node (C) at (8,3) {$\E(X_3,Y_3)\otimes\E(Y_3,Z_3)$};
        \node (A') at (0,1.5) {$\E(X_1,Y_1)\otimes\E(Y_1,Z_1)$};
        \node (C') at (8,1.5) {$\E(X_3,Y_3)\otimes\E(Y_3,Z_3)$};
        \node (Y) at (0,0) {$\E(X_1,Z_1)$};
        \node (Z) at (8,0) {$\E(X_3,Z_3)$};
        \draw[->] (A) to node [sloped] {$\shortmid$} node[above] {{\scriptsize $\E(f,g)\otimes\E(g,h)$}} (B);
        \draw[->] (B) to node [sloped] {$\shortmid$} node[above] {{\scriptsize $\E(f',g')\otimes\E(g',h')$}} (C);
        \draw[double] (A') to (A);
        \draw[double] (C') to (C);
        \draw[->] (A') to node [sloped] {$\shortmid$} node[above] {{\scriptsize $\E(f;f',g;g')\otimes\E(g;g',h;h')$}} (C');
        \draw[->] (A') to node[left] {{\scriptsize $\odot^\D_{X_1,Y_1,Z_1}$}} (Y);
        \draw[->] (C') to node[right] {{\scriptsize $\odot^\D_{X_3,Y_3,Z_3}$}} (Z);
        \draw[->] (Y) to node [sloped] {$\shortmid$} node[below] {{\scriptsize $\E(f;f',h;h')$}} (Z);
        \node () at (4,2.25) {$C^\D_{f,g,f',g'}\otimes C^\D_{g,h,g',h'}$};
        \node () at (4,.75) {$\odot^\D_{f;f',g;g',h;h'}$};
      \end{tikzpicture}\\
      \begin{tikzpicture}
        \node () at (-2,1.5) {$=$};
        \node (A) at (0,3) {$\E(X_1,Y_1)\otimes\E(Y_1,Z_1)$};
        \node (B) at (4,3) {$\E(X_2,Y_2)\otimes\E(Y_2,Z_2)$};
        \node (C) at (8,3) {$\E(X_3,Y_3)\otimes\E(Y_3,Z_3')$};
        \node (X) at (0,1.5) {$\E(X_1,Z_1)$};
        \node (Y) at (4,1.5) {$\E(X_2,Z_2)$};
        \node (Z) at (8,1.5) {$\E(X_3,Z_3)$};
        \node (X') at (0,0) {$\E(X_1,Z_1)$};
        \node (Z') at (8,0) {$\E(X_3,Z_3)$};
        \draw[->] (A) to node [sloped] {$\shortmid$} node[above] {{\scriptsize $\E(f,g)\otimes\E(g,h)$}} (B);
        \draw[->] (B) to node [sloped] {$\shortmid$} node[above] {{\scriptsize $\E(f',g')\otimes\E(g',h')$}} (C);
        \draw[->] (A) to node[left] {{\scriptsize $\odot^\E_{X_1,Y_1,Z_1}$}} (X);
        \draw[->] (B) to node[left] {{\scriptsize $\odot^\E_{X_2,Y_2,Z_2}$}} (Y);
        \draw[->] (C) to node[right] {{\scriptsize $\odot^\E_{X_3,Y_3,Z_3}$}} (Z);
        \draw[->] (X) to node [sloped] {$\shortmid$} node[below] {{\scriptsize $\E(f,h)$}} (Y);
        \draw[->] (Y) to node [sloped] {$\shortmid$} node[below] {{\scriptsize $\E(f',h')$}} (Z);
        \draw[double] (X') to (X);
        \draw[double] (Z') to (Z);
        \draw[->] (X') to node [sloped] {$\shortmid$} node[below] {{\scriptsize $\E(f;f',h;h')$}} (Z');
        \node () at (2,2.25) {$\odot^\D_{f,g,h}$};
        \node () at (6,2.25) {$\odot^\D_{f',g',h'}$};
        \node () at (4,0.75) {$C^\D_{f,h,f',h'}$};
      \end{tikzpicture}
    \end{center}
  \item with a tensorial structure
    \begin{align*}
      (U^\D_{f}\otimes id^\eA_{\E(f,g)});(\odot^\E_{f,f,g}) &= id^\eA_{\E(f,g)} \\
      (id^\eA_{\E(f,g)}\otimes U^\D_{g});(\odot^\D_{f,g,g}) &= id^\eA_{\E(f,g)} \\
      ((\odot^\D_{f,g,h})\otimes id^\eA_{\E(h,k)});(\odot^\D_{f,h,k}) &= (id^\eA_{\E(f,g)}\otimes(\odot^\D_{g,h,k}) );(\odot^\D_{f,g,k}) \\
    \end{align*}
  \item and a distribution :
    \begin{align*}
      (\E(f,g)\odot^\D\E(g,h));(\E(f',g')\odot^\D\E(g',h')) &= (\E(f,g);\E(f',g'))\odot^\D(\E(g,h);\E(g',h'))
    \end{align*}
  \end{itemize}

    In addition, we call vertical category the $\eV$-enriched category $\H$ :
    \begin{itemize}
    \item which objects are those of the vertical category : $|\H|=|\V|$,
    \item which morphisms are the objects of the arrow category : 
      $$\H(X,Y) = \E(X,Y) \in |\eH|=|\eV|$$ 
    \item and which unit and composition are given by $U$ and $\odot$ on objects.
    \end{itemize}
\end{definition}

Similarly to the slice over $\1$ that give the underlying non-enriched category, the Grothendieck construction on a vertically enriched double category gives the underlying non-enriched double category :

\begin{theorem}
  Let $\eD =(\eA,\eH,\eV,U^\eD,\odot^\eD)$ be a monoidal double category and $\D=(\E,\H,V,U^\D,\odot^\D)$ a $\eD$-enriched double category.\\
  The underlying non-enriched double category $\int\D$ is given by :
  \begin{itemize}
  \item the vertical category do not change $V:=\V$,
  \item the arrow category is the Grothendieck construction $\A=\int\E$, spelled out, it is :
    $$\left|\int\E\right| := \left\{(X,Y,u) \middle| X,Y\in|\V|, u\in\eV(\1,\E(X,Y))\right\},$$
    $$\left(\int\E\right)((X,Y,u),(X',Y',v)) := \left\{(f,g,\phi) \middle| \begin{matrix}f\in \V(X,X'),\ g\in\V(Y,Y'),\\ \phi\in\eA(U^\eD_\1,\E(f,g)),\\ S(\phi)=u,\ T(\phi)=v\end{matrix}\right\},$$
  \item the source and target are given by the Grothendieck projection :
    $$S(X,Y,u):=X,\quad S(f,g,\phi):=f,\quad T(X,Y,u):=Y,\quad S(f,g,\phi):=g,$$
  \item $U : \V \rightarrow \int\E$ is given by 
    $$U_X := (X,X,U_X), \quad U_f:=(f,f,U^\D_f)$$
  \item $(\_\odot\_) : T\vee S \rightarrow \int\E$ is given by :
    $$ (X,Y,u)\odot(Y,Z,v) := (X,Z,(\rho;u\otimes v;\odot^\D)) $$
    $$ (f,g,\phi)\odot(g,h,\psi) := (f,h,(U^\eD_\rho;(\phi\otimes\psi);\odot_{f,g,h})) $$
  \end{itemize}
\end{theorem}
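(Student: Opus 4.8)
The plan is to verify the clauses of the first Definition (double category) for the data displayed above, having first supplied the two ingredients the statement leaves implicit: the identities and the composition of the category $\int\E$, and the coherence isomorphisms $\lambda,\rho,\alpha$ of $\int\D$. Throughout it helps to picture a morphism $(f,g,\phi):(X,Y,u)\to(X',Y',v)$ of $\int\E$ as the square $\phi$ with top $U^\eD_\1$, bottom $\E(f,g)$, left leg $u$ and right leg $v$; vertically composing, tensoring and horizontally composing such squares is what drives the whole argument.

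I would first fix the category structure of $\int\E$. The identity on $(X,Y,u)$ is $(id_X,id_Y,\ U^\eD_u ; I^\E_{X,Y})$, the vertical composite of the square $U^\eD_u$ with the unit laxator $I^\E_{X,Y}$, which is a square from $U^\eD_\1$ to $\E(id_X,id_Y)$ with both legs equal to $u$. The composite of $(f,g,\phi):(X,Y,u)\to(X',Y',v)$ with $(f',g',\phi'):(X',Y',v)\to(X'',Y'',w)$ is $(f;f',\ g;g',\ \lambda^\eD ; (\phi\odot^\eD\phi') ; C^\D_{f,g,f',g'})$: the horizontal composite $\phi\odot^\eD\phi'$ is legitimate because the right leg $v$ of $\phi$ is the left leg of $\phi'$, its top $U^\eD_\1\odot^\eD U^\eD_\1$ is reindexed to $U^\eD_\1$ along the unitor $\lambda^\eD$, and the composition laxator $C^\D_{f,g,f',g'}$ carries its bottom $\E(f,g)\odot^\eD\E(f',g')$ down to $\E(f;f',g;g')$. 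Associativity of this operation reduces, using the coherence axioms of $\eD$ and the functoriality of $\odot^\eD$, to the displayed lax-functoriality equation pairing $C^\D_{f,g,f',g'}\otimes C^\D_{g,h,g',h'}$ with $\odot^\D$, together with the associativity clause of the tensorial structure; the two unit laws reduce to the two lax-unit equalities and the first two tensorial clauses. That this is the correct choice is confirmed a posteriori by the equalities used for $U$ below.

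Next I would treat the four structure functors. That $S,T:\int\E\to\V$ are functors, that $U;S=U;T=id_\V$, and that the source/target equations for $\odot$ hold, are all immediate, since these are the Grothendieck projections and all the relevant constructions are strict. Functoriality of $U:\V\to\int\E$ is exactly two of the displayed equalities: $U$ preserves identities by the equation rewriting $U^\D_{id_X}$ as $U^\eD_{U^\D_X} ; I^\D_X$, and $U$ preserves composition by the equation rewriting $U^\D_{f;g}$ as $\lambda^\eD ; (U^\D_f\odot^\eD U^\D_g) ; C^\D_{f,f,g,g}$, which is precisely the composite of $\int\E$ defined above (which is why that choice is the right one). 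Functoriality of $(\_\odot\_):T\vee S\to\int\E$ is the heart of the proof: preservation of identities uses the ``$\odot^\D_{id_X,id_Y,id_Z}$'' lax-functoriality equality (the one with $I^\D$ on both rows), while preservation of composition of squares, after peeling off the unitor $U^\eD_\rho$ and using functoriality of $\otimes^\eA$ and of $\odot^\eD$, comes down precisely to the \emph{distribution} axiom, which is the interchange law asserting compatibility of $\odot^\D$ with vertical composition in $\eA$.

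Finally, for the coherence data: on a horizontal arrow $(X,Y,u)$ of $\int\D$, the first two (object-level) clauses of the tensorial structure identify $U_X\odot(X,Y,u)$ and $(X,Y,u)\odot U_Y$ with $(X,Y,u)$ up to the unitors of $\eV$, and the third clause identifies the two bracketings of a triple $\odot$-product up to the associator of $\eV$; I would take $\lambda,\rho,\alpha$ of $\int\D$ to be these coherence cells of $\eV$ lifted through the Grothendieck construction. They are globular, so their $S$ and $T$ are identities; they are natural because the coherence cells of $\eV$ are; and the two coherence axioms of $\int\D$ follow from those for $\lambda^\eD,\rho^\eD,\alpha^\eD$ in $\eD$ together with Mac Lane coherence in $\eV$ and $\eA$. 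The step I expect to be the real obstacle is not any single clause but the simultaneous bookkeeping of the three independent layers of weakness that all appear at once — the laxators $I^\E,C^\D$ of $\E$, the unitors and associators of $\otimes^\eV$ and $\otimes^\eA$, and the globular isomorphisms $\lambda^\eD,\rho^\eD,\alpha^\eD$ of $\eD$ — and in particular, inside the functoriality of $\odot$ on $\int\E$, commuting $C^\D$ past $\otimes^\eA$ and then past $\odot^\eD$ in the right order; once the distribution axiom and the strict monoidality of $S,T,U,\odot^\eD$ are slotted in there, the remaining verifications are long but routine diagram chases.
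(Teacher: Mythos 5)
The paper states this theorem with no proof at all (the draft explicitly omits formal proofs), so there is nothing to compare your argument against; judged on its own terms, your plan is the natural one and supplies exactly the data the statement leaves implicit. Your identity $(id_X,id_Y,\,U^\eD_u;I^\E_{X,Y})$ and composite $(f;f',\,g;g',\,\lambda^\eD;(\phi\odot^\eD\phi');C^\D_{f,g,f',g'})$ are the standard Grothendieck construction for a lax functor into $\eH$; your matching of the displayed equalities $U^\D_{id_X}=U^\eD_{U^\D_X};I^\D_X$ and $U^\D_{f;g}=(U^\D_f\odot^\eD U^\D_g);C^\D_{f,f,g,g}$ to the functoriality of $U$ is exactly right, as is the use of the second lax-functorial equality (the one with $I^\D\otimes I^\D$ against $U^\eD_{\odot^\D}$) for $\odot$ preserving identities.

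Two attributions in the middle of your plan are off, however. First, associativity and unitality of the composition you define on $\int\E$ are \emph{not} consequences of the equation pairing $C^\D_{f,g,f',g'}\otimes C^\D_{g,h,g',h'}$ with $\odot^\D$, nor of the tensorial clauses: those two pieces of structure govern, respectively, the functoriality of the horizontal composition $\odot$ on $\int\E$ and the unitors/associator of the resulting double category $\int\D$. What associativity and unitality of the vertical composition actually need are the coherence axioms of the lax functor $\E$ itself, of the shape $(C^\D_{f,g,f',g'}\odot^\eD id);C^\D = \alpha^\eD;(id\odot^\eD C^\D_{f',g',f'',g''});C^\D$ together with the unit triangles for $I^\E$; the paper displays the cells $I^\E$ and $C^\D$ but never these axioms, so you must invoke them explicitly as part of what ``lax functor'' means. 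Second, preservation of composition by $\odot:T\vee S\to\int\E$ hinges on the fourth displayed lax-functoriality equation (the one relating $C^\D\otimes C^\D$ followed by $\odot^\D$ to $\odot^\D\odot^\eD\odot^\D$ followed by $C^\D$) together with the interchange $(\phi\odot^\eD\phi')\otimes(\psi\odot^\eD\psi')=(\phi\otimes\psi)\odot^\eD(\phi'\otimes\psi')$ supplied by strict monoidality of $\odot^\eD$; the ``distribution'' axiom as written in the paper is too garbled to carry this step on its own, and your reading of it as an interchange law is only one possible disambiguation. With these reassignments the rest of your plan goes through as the routine diagram chase you describe.
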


Remarks : 
\begin{itemize}
\item For $(\Set,\Set)$-enriched double categories, the Grothendieck construction is known to give only framed categories, but this is not true for any enrichment, in particular, we will see that the Grothendieck construction over $(\Span,\Set)$-enriched double categories is an equivalence of category.
\item The transversal double category cannot be easily defined as we did not define enrichments where vertical category are enriched.
\item The diagonal and globular categories can be defined as (2-)categories by looking at the underlying non-enriched double category. But enriched versions are unknown.
\end{itemize}

\section{Examples}

\begin{lemma}
  Thin double categories are exactly horizontally $(\Rel,\Set)$-enriched double category.
\end{lemma}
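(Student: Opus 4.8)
The plan is to unfold both sides of the claimed equivalence and check that the data matches. A thin double category is one where each "square slot" $(u,v,f,g)$ holds at most one 2-cell; a horizontally $(\Rel,\Set)$-enriched double category has $\V$ as vertical category, a lax functor $\E:\V\times\V\to\eH$ where $\eD$ is built from $(\Rel,\Set)$ (so $\eV=\Set$ and $\eH$ has sets as objects and relations — equivalently $\{0,1\}$-valued matrices — as horizontal morphisms, with $\eA$ the obvious squares), together with the unit and composition data $U^\D,\odot^\D$ and their coherence. So I would first pin down precisely what the monoidal double category $\eD$ attached to $(\Rel,\Set)$ is: $|\eV|=|\eH|=\Set$, $\eV=\Set$ with cartesian (or chosen) monoidal structure, $\eH(X,Y)=\Rel(X,Y)$, and $\eA$ consisting of squares whose vertical sides are functions and horizontal sides are relations, commuting in the relational sense; $\otimes^\eD$ is the product on objects. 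The key observation is that in $\Rel$ there is \emph{at most one} 2-cell between any two parallel relations given fixed boundary functions — i.e. $\eA$ is \emph{thin} — which is exactly the feature that will correspond to thinness of $\D$.

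Next I would carry out the two directions. For "enriched $\Rightarrow$ thin": given $\D$ with $\E,U^\D,\odot^\D$, reconstruct $\int\D$ via the Grothendieck construction of the previous theorem. Its squares $(f,g,\phi)$ with $\phi\in\eA(U^\eD_\1,\E(f,g))$, $S(\phi)=u$, $T(\phi)=v$ — since $\eA$ is thin, $\phi$ is uniquely determined by its boundary $(f,g,u,v)$, hence $\int\D$ is a thin double category. Conversely, for "thin $\Rightarrow$ enriched": given a thin $\D$ with vertical category $\V$, define $\E(X,Y)$ to be the set $\H(X,Y)$ of horizontal arrows $X\todi Y$; define $\E(f,g):\H(X,Y)\to\H(X',Y')$ as the relation relating $u$ to $v$ iff there exists a (necessarily unique) square $\phi\in\A(u,v)$ with $S(\phi)=f$, $T(\phi)=g$. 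I would check $\E$ is a lax functor: the laxity cells $I^\E$ and $C^\D$ exist because the relevant squares (identity square on $u$, vertical composite of two squares) exist in $\D$, and they are unique 2-cells so all coherence equations between 2-cells in $\eA$ hold automatically. The unit datum $U^\D_X$ picks out $U_X\in\H(X,X)$ (a singleton-to-$\H(X,X)$ function), $U^\D_f$ is the relation-square witnessing $U_f$; composition $\odot^\D$ on objects sends $(u,v)\mapsto u\odot v$, and $\odot^\D$ on morphisms is the square witnessing the horizontal composite, which exists by functoriality of $\odot$ in $\D$.

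The main obstacle — the only place real work happens — is verifying that \emph{every} coherence equation required of a $\eD$-enriched double category is automatically satisfied on the thin side, and conversely that thinness of $\int\D$ forces no loss of information so that the two constructions are mutually inverse (up to the appropriate notion of sameness). The first half is essentially "all diagrams of 2-cells in a thin target commute", which is clean once one observes $\eA(\Rel,\Set)$ is thin; I would state this as a small lemma. The second half requires checking that starting from a thin $\D$, forming $\E$, then taking $\int\D$ recovers $\D$ on the nose: squares of $\int\D$ are triples $(f,g,\phi)$ with $\phi$ a relational witness, and thinness of $\D$ gives a bijection between such witnesses and actual squares of $\D$ with that boundary — so $\int\D\cong\D$. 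And starting from an enriched structure over $(\Rel,\Set)$, thinness of $\eA$ means $\E$, $U^\D$, $\odot^\D$ are each determined by their action on objects, i.e. by the underlying thin double category, so the round trip is the identity there too.

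I would organize the write-up as: (i) a lemma that the double category $\eD$ associated to $(\Rel,\Set)$ has thin $\eA$; (ii) the thin-to-enriched construction and the verification that laxity/coherence hold vacuously; (iii) the enriched-to-thin direction via $\int\D$ and the observation that thinness of $\eA$ collapses the enriched data to the underlying double category; (iv) conclude the two assignments are mutually inverse, giving the stated "exactly".
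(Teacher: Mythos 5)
Your proposal matches the paper's proof in both directions: the thin-to-enriched construction (taking $\E(X,Y)=\H(X,Y)$ and $\E(f,g)$ to relate $u,v$ iff a square with that boundary exists) is exactly the paper's, and the enriched-to-thin direction is the paper's computation that the Grothendieck construction yields hom-sets of squares with fixed boundary that are empty or singletons — your "the arrow category of $(\Rel,\Set)$ is thin" lemma is just a cleaner packaging of that same observation. Note that the paper itself leaves the mutual-inverse/equivalence verification as a remaining task, just as you flag it as the place where the real work happens.
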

\begin{proof}
  By definitions, a $\Rel$-enriched double category $\D$ is given by 
  \begin{itemize}
  \item a vertical category $\V$
  \item a lax functor :
    $$ \E : \V\times\V \rightarrow \Rel $$
    whose lax-ness spells out as :
    $$ id^\Rel_{\E(X,Y)} \subseteq \E(id^\V_X,id^\V_Y) $$
    $$ \E(f,g);\E(f';g') \subseteq \E(f;f',g;g') $$
  \item with
    $$ U^\D_X \in \E(X,X)$$
    $$ (\_\odot\_) : \E(X,Y)\times \E(Y,Z) \rightarrow \E(X,Z) $$
    natural in $X$, $Y$ and $Z$ in the sens that for any $f,g,h$ :
    $$ (U^\D_X,U^\D_{X'}) \in \E(f,f) $$
    $$ \bigl(\ (u,u')\in D(f,g) \quad\text{and}\quad (v,v')\in D(g,h)\ \bigr)\quad \Rightarrow\quad (u\odot v,u'\odot v')\in D(f,h) $$
  \end{itemize}
  Let $\D=(\E,\V,\odot,U)$ such a category. The following is a thin double category :
  \begin{itemize}
  \item $|\A| := \biguplus_{X,Y\in\|V|}\E(X,Y)$
  \item $\A(u,v) := \{(f,g) \mid (u,v)\in \E(f,g)\}$
  \item $id_u^{\A} := (id_X,id_Y) \in \A(u,u)$ since $(u,u)\in id^{\Rel}_{\E(X,Y)}\subseteq \E(id_X,id_X)$
  \item $(f,g);(f',g') := (f;f',g;g')$
  \item $S(X,Y,u)=X$, $T(X,Y,u)=Y$, $S(f,g)=f$, $T(f,g)=g$
  \item $U_X \in \E(X,X)\subseteq |\A|$,  $U_f:=\{(f,f)\}\in\A(U_X,U_Y)$
  \item $f \odot g \in\E(X,Z)\subseteq |\A|$, $\phi\odot\psi := (S(\phi),T(\psi))$
  \end{itemize}
  Given a thin double category $\D$, the following is a $\Rel$-enriched double category :
  \begin{itemize}
  \item $\E(X,Y) := \{u \mid S(u)=X, T(u) = Y\}$,
  \item $\E(f,g) := \{(u,v) \mid \exists \phi\in \A(u,v)\mid S(\phi)=f, T(\phi)=g\}$
  \item $(u,u)\in \E(id_X,id_Y)$ since $id_u^\A\in\A(u,u)$ and $S,T$ are functors,
  \item if $(u,v)\in\E(f,g)$ and $(v,w)\in\E(f',g')$, then there are $\phi\in\A(u,v)$, $\psi\in\A(v,w)$ with appropriate source and targets, and thus  $(u,w)\in\E(f;f',g;g')$ since $\phi;\psi \in\A(u,w)$ with appropriate source and target (by functoriality of $S,T$)
  \end{itemize}
  Conversely, given a $\Rel$-enriched double category, the Grothendieck construction give a double category such that for any given $u,v,f,g$ : 
  \begin{align*}
    &\left\{\chi\in\left(\int\E\right)((X,Y,u),(X',Y',v)) \middle| S(\chi)=f, T(\chi)=g\right\}\\
    &= \left\{(f,g,\phi)\in\left(\int\E\right)((X,Y,u),(X',Y',v)) \right\}\\
    &= \left\{(f,g,\phi)\middle| \phi\in\mathcal\eA(U^\eD_\1,\E(f,g)), S^\eD(\phi)=u,S^\eD(\phi)=v\right\}\\
    &= \left\{(f,g,*)\middle| v_*\ge u_*;\E(f,g)\right\}
    \end{align*}
    which is either empty or a singleton.
    \todo[inline]{show that this is an equivalence of category}
\end{proof}

\begin{lemma}
  Double categories are exactly internally $(\Span,\Set)$-enriched double categories, where $\Span$ is the category of sets and multirelations (i.e. maps $X\times Y\rightarrow \Set$).
\end{lemma}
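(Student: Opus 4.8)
The plan is to follow the proof of the previous lemma, replacing $\Rel$ by $\Span$: where the multirelation $\E(f,g)$ was forced to be $\{0,1\}$-valued it is now allowed arbitrary sets of witnesses, so the ``at most one square'' of a thin double category becomes a genuine set of squares, and the correspondence upgrades from a pointwise coincidence to an isomorphism of categories. First I would fix the monoidal double category $(\Span,\Set)$: objects are sets, vertical maps are functions, a horizontal map $X\todi Y$ is a multirelation $R\colon X\times Y\to\Set$ (equivalently a span $X\leftarrow A\to Y$), a square over $(h\colon X\to X',\,k\colon Y\to Y')$ from $R$ to $R'$ is a family of functions $R(x,y)\to R'(hx,ky)$, $\odot^\eD$ is span composition $(R;R')(x,z)=\sum_y R(x,y)\times R'(y,z)$, $U^\eD$ is the identity span, and $\otimes^\eD$ is the cartesian product on both $\eV$ and $\eA$ with unit $\1=\{*\}$. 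I would record that this is a monoidal double category and extract the two computations I will need: $\eV(\1,S)\cong S$ and, for $u\colon\1\to X$ and $v\colon\1\to Y$, the hom $\eA(U^\eD_\1,R)$ of squares with those legs is canonically the fibre $R(u,v)$.

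Next, from a double category $\D=(\A,\V,\H,\odot,U)$ I would build a $(\Span,\Set)$-enriched double category $\D_\Span$ by: keeping $\V$; $\E(X,Y):=\H(X,Y)=\{u\in|\A|\mid S(u)=X,\,T(u)=Y\}$; $\E(f,g)(u,v):=\{\phi\in\A(u,v)\mid S(\phi)=f,\,T(\phi)=g\}$; $I^\E_{X,Y}\colon u\mapsto id^\A_u$ (landing in $\E(id^\V_X,id^\V_Y)(u,u)$ since $S,T$ are functors); $C^\D_{f,g,f',g'}\colon(\phi,\psi)\mapsto\phi;\psi$ (vertical composition in $\A$, with the correct boundary by functoriality of $S,T$); $U^\D_X:=U_X$ and $U^\D_f:=U_f$ (which lies in $\E(f,f)(U_X,U_Y)$ since $U;S=U;T=id_\V$); and $\odot^\D$ given by $u\odot v$ on objects and $\phi\odot\psi$ on squares. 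Conversely, from a $(\Span,\Set)$-enriched $\D$ I would take the Grothendieck construction $\int\D$ of the Theorem; by the computations of $\eV(\1,-)$ and $\eA(U^\eD_\1,-)$ above, its arrow category has objects $(X,Y,u)$ with $u\in\E(X,Y)$ and morphisms $(f,g,\phi)\colon(X,Y,u)\to(X',Y',v)$ with $\phi\in\E(f,g)(u,v)$, vertical composition through $C^\D$, units through $I^\E$ and $U^\D$, and horizontal structure through $\odot^\D$ — which is a double category.

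The core of the argument is a dictionary: under the translation \emph{horizontal arrow $=$ element of $\E(X,Y)$, square $=$ element of $\E(f,g)(u,v)$, vertical composition $=C^\D$, horizontal composition $=\odot^\D$, identity square $=I^\E$, unit arrow/square $=U^\D$}, each coherence axiom in the definition of an internally $\eD$-enriched double category unpacks to exactly one axiom of a double category, and conversely. I would go through them: the lax-functor coherence of $\E$ (compatibility of $C^\D$ with ternary composites and with $I^\E$) is associativity and unitality of vertical composition, i.e. the category axioms for $\A$; the first ``lax-functorial equality'' says $U_{id^\V_X}=id^\A_{U_X}$, the second $id^\A_u\odot id^\A_v=id^\A_{u\odot v}$, the third $U_{f;g}=U_f;U_g$ — i.e. functoriality of $U\colon\V\to\A$ and preservation of identities by $\odot$; the last two are the interchange law $(\phi\odot\psi);(\phi'\odot\psi')=(\phi;\phi')\odot(\psi;\psi')$, i.e. functoriality of $\odot\colon T\vee S\to\A$; the ``functorial'' globular squares for $U^\D_f$ and $\odot^\D_{f,g,h}$ assert that $U_f$ and $\phi\odot\psi$ have the declared source and target; and the ``tensorial structure'' and ``distribution'' equations are exactly the (strict) left/right unit and associativity laws for $\odot$, together again with interchange. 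This simultaneously checks that $\D_\Span$ and $\int\D$ are well defined, and that the two constructions are mutually inverse up to canonical isomorphism: $\int(\D_\Span)$ has arrow objects $(X,Y,u)$ with $u\in\H(X,Y)$, but $X=S(u)$ and $Y=T(u)$, so this is just $|\A|$, and likewise its morphisms are those of $\A$; dually $(\int\D)_\Span$ reproduces $\E$, $I^\E$, $C^\D$, $U^\D$, $\odot^\D$ on the nose. Both passages are evidently functorial and the isomorphisms natural, yielding the claimed (iso)equivalence of categories.

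The step I expect to be the main obstacle is almost entirely bookkeeping: pinning down $(\Span,\Set)$ and its monoidal structure precisely enough that the Theorem's Grothendieck construction really returns ``a set of squares per boundary'' rather than a map of spans valued in something larger, and then matching the long list of enrichment coherences one-to-one with the double-category axioms without omission or double-counting — in particular noting that the strict ``tensorial structure'' equations force $\odot$ to be strictly associative and unital, so that strictly speaking the lemma identifies $(\Span,\Set)$-enriched double categories with \emph{strict} double categories (a caveat that applies already to the previous lemma).
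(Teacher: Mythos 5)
The paper offers no proof of this lemma --- the proof environment contains only a ``Todo'' placeholder --- so there is nothing to compare your argument against except the template set by the preceding lemma on thin double categories and $(\Rel,\Set)$. Your proposal follows exactly that template (replace $\Rel$ by $\Spans$ so that $\E(f,g)(u,v)$ becomes the \emph{set} of squares from $u$ to $v$ over $(f,g)$ rather than a truth value), and the dictionary you set up --- $I^\E$ as identity squares, $C^\D$ as vertical composition, $\odot^\D$ as horizontal composition, the lax-functor coherences as the category axioms for $\A$, the ``distribution'' equation as interchange --- is the right one and is at least as detailed as the paper's own proofs elsewhere. Two points are worth retaining explicitly if this is written up: (i) your observation that the strict ``tensorial structure'' equations in the enrichment match only \emph{strict} horizontal composition, whereas the paper's definition of double category carries associator and unitor isomorphisms $\lambda,\rho,\alpha$, is a genuine mismatch that the lemma as stated glosses over (and, as you note, already affects the $\Rel$ case); and (ii) upgrading ``mutually inverse up to canonical isomorphism'' to the claimed equivalence of categories requires fixing the morphisms on both sides, which the paper itself leaves as a todo even in the $\Rel$ case. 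Neither point is a gap in your reasoning so much as a gap in the statement being proved.

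\newcommand*{\Spans}{\Span}
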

\begin{proof}
\todo[inline]{Todo...}
\end{proof}

\begin{lemma}
  Framed categories are exactly internally $(\Set,\Set)$-enriched double categories.
\end{lemma}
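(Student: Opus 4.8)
The plan is to unwind the definition of a $(\Set,\Set)$-enriched double category, recognise --- via the Grothendieck construction of the Theorem above --- its underlying double category as a framed bicategory, run the reverse construction starting from an arbitrary framed bicategory, and check that the two passages are mutually inverse up to canonical isomorphism. The starting point is that for $(\Set,\Set)$-enrichment the ambient monoidal double category $\eD$ is $\mathrm{Sq}(\Set)$: objects are sets, both vertical and horizontal morphisms are functions, cells are commutative squares, and $\otimes$ is the cartesian product. This $\eD$ is \emph{thin} --- a commutative square is uniquely determined by its boundary --- so its globular cells are trivial; consequently a lax functor $\E:\V\times\V\to\eH=\Set$ is forced to be strict, the coherence cells $I^\E$, $C^\D$ and $U^\eD_{(-)}$ are all identities, every lax-functoriality equality holds vacuously, and the tensorial and distribution laws collapse to their component-level statements. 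What remains of the data of a $(\Set,\Set)$-enriched double category $\D$ is therefore exactly: a category $\V$; a functor $\E:\V\times\V\to\Set$; a family $U^\D_X\in\E(X,X)$ natural in $X$ (equivalently $\E(f,f)(U^\D_X)=U^\D_{X'}$ for every $f:X\to X'$); and a natural transformation $\odot^\D$ between the functors $(X,Y,Z)\mapsto\E(X,Y)\times\E(Y,Z)$ and $(X,Y,Z)\mapsto\E(X,Z)$ on $\V^3$; subject to the usual unit and associativity laws. Equivalently, a category $\H$ with object set $|\V|$ together with a functorial action of $\V\times\V$ on its hom-sets.

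Feeding this through the Theorem with $\eD=\mathrm{Sq}(\Set)$ produces the underlying double category $\int\D$, which I claim is framed. Its arrow category $\int\E$ has objects the triples $(X,Y,u)$ with $u\in\E(X,Y)$, and a \emph{unique} morphism $(X,Y,u)\to(X',Y',v)$ over $(f,g)$, existing exactly when $\E(f,g)(u)=v$ (the witnessing square in $\mathrm{Sq}(\Set)$ being unique by thinness). For $f:X\to X'$ I would take the companion $\hat f:=\E(\mathrm{id}_X,f)(U^\D_X)\in\E(X,X')$ and the conjoint $\check f:=\E(f,\mathrm{id}_X)(U^\D_X)\in\E(X',X)$; the required binding squares exist because the pertinent equalities hold --- \eg $\E(f,\mathrm{id}_{X'})(\hat f)=\E(f,f)(U^\D_X)=U^\D_{X'}$ by functoriality of $\E$ and naturality of $U^\D$ --- and the companion/conjoint equations then follow from functoriality of $\E$ and the unit laws of $\odot^\D$. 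This exhibits $\int\D$ as an inclusion double category; the symmetric argument with conjoints does the same for $(\int\D)^{vop}$, so $\int\D$ is framed, and $\D\mapsto\int\D$ takes $(\Set,\Set)$-enriched double categories to framed bicategories.

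For the converse, start from a framed bicategory $\D=(\A,\V,\H,\odot,U)$ and fix, for every vertical arrow, a companion and a conjoint with their binding cells. Set $\E(X,Y):=\H(X,Y)$, $U^\D_X:=U_X$, $\odot^\D:=\odot$, and on vertical arrows $\E(f,g):\H(X,Y)\to\H(X',Y')$, $u\mapsto\check f\odot u\odot\hat g$ with the canonical associators. Functoriality of $\E$ follows from the standard isomorphisms $\widehat{\mathrm{id}_X}\cong U_X$ and $\widehat{g;g'}\cong\hat g\odot\hat{g'}$ together with their conjoint duals; naturality of $U^\D$ and $\odot^\D$ records the compatibility of the binding cells with horizontal units and composition; the unit and associativity laws of $\odot^\D$ are inherited from $\H$; and the distribution law is the interchange law of $\D$. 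One then checks that applying $\int$ to this data returns $\D$ up to canonical isomorphism --- the key point being that in a framed bicategory a square $u\Rightarrow v$ over $(f,g)$ is the same datum as a globular cell $\check f\odot u\odot\hat g\Rightarrow v$, so once globular cells have been trivialised the arrow hom-sets are recovered as the solution sets of $\E(f,g)(u)=v$. Packaging both passages with their action on (enriched) double functors then gives the equivalence of the two categories asserted by ``exactly''.

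The step I expect to be the main obstacle is the coherence bookkeeping in the converse: proving $u\mapsto\check f\odot u\odot\hat g$ functorial and the comparison $\int\D\simeq\D$ an equivalence means chasing companion/conjoint coherence cells through $\odot$ and its associator and unitors, while remembering that a framed bicategory only determines companions up to canonical isomorphism --- so strictness of the correspondence must be obtained either by pinning down the canonical choice that $\int$ makes or by stating the result up to equivalence throughout. A secondary issue is that the excerpt's definition of ``inclusion double category'' does not itself impose the companion triangle identities, so one should check whether they are needed for the reconstruction and, if so, use the standard notion. A shortcut that avoids much of this is to deduce the statement from the preceding lemma identifying double categories with $(\Span,\Set)$-enriched ones: the inclusion $\Set\hookrightarrow\Span$ (a function seen as a span with invertible left leg) induces a fully faithful inclusion of $(\Set,\Set)$-enriched into $(\Span,\Set)$-enriched double categories, and it then remains only to identify its essential image, \ie to show that ``every reindexing span $\E(f,g)$ is the graph of a function'' is equivalent to framedness.
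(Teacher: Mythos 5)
The paper gives no proof of this lemma --- the proof body is an explicit ``Todo'' placeholder --- so your argument can only be assessed on its own terms, under your (reasonable, but nowhere confirmed by the paper) reading of $(\Set,\Set)$ as the double category $\mathrm{Sq}(\Set)$ of sets, functions, functions and commutative squares. Under that reading your forward direction is sound: thinness of $\mathrm{Sq}(\Set)$ forces $\E$ to be strict, the Grothendieck construction has a unique cell $(X,Y,u)\to(X',Y',v)$ over $(f,g)$ precisely when $\E(f,g)(u)=v$, and your candidates $\hat f=\E(id_X,f)(U^\D_X)$ and $\check f=\E(f,id_X)(U^\D_X)$, together with $\E(f,f)(U^\D_X)=U^\D_{X'}$, do yield the tuples $(f,u,\phi,\psi)\in\T(X,X')$ required by the paper's definitions of inclusion double category and framed bicategory.

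The genuine gap is in the converse. The Grothendieck construction of a $\Set$-valued $\E$ is always thin --- indeed $(S,T):\A\to\V\times\V$ is a \emph{discrete} opfibration: for each $(f,g,u)$ there is exactly one $v$ and exactly one cell $u\Rightarrow v$ over $(f,g)$. A framed bicategory in the paper's sense has no such property: its globular cells $\H^2(u,v)$ can be arbitrary (take $\V$ trivial and $\H$ any non-posetal one-object bicategory; the lone identity arrow trivially has a companion and a conjoint). Your pivotal sentence --- ``once globular cells have been trivialised the arrow hom-sets are recovered as the solution sets of $\E(f,g)(u)=v$'' --- assumes precisely what must be proved: nothing in the construction trivialises the globular cells, and the comparison $\int\E\simeq\D$ already fails for thin framed bicategories whose globular order is not discrete, since a cell $u\Rightarrow v$ over $(f,g)$ corresponds to a globular cell $\check f\odot u\odot\hat g\Rightarrow v$, not to the equality $\check f\odot u\odot\hat g=v$. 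So either $(\Set,\Set)$ must be read differently, or ``exactly'' must be weakened, or ``framed'' must be strengthened to something like ``$(S,T)$ is a discrete bifibration''. Your proposed shortcut through the preceding $(\Span,\Set)$ lemma is in fact the more honest route: computing the essential image of $\Set\hookrightarrow\Span$ there identifies $(\Set,\Set)$-enriched double categories with double categories whose $(S,T)$ is a discrete opfibration, which exposes this mismatch instead of burying it in the companion/conjoint coherence chase you rightly flag as the hard part.
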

\begin{proof}
\todo[inline]{Todo...}
\end{proof}

\begin{lemma}
  Externally $\C$-enriched categories are exactly internally $(\C,\Span(\C))$-enriched category (up to a transposition).
\end{lemma}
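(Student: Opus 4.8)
The plan is to unfold both notions into explicit data and exhibit a term‑by‑term dictionary between them; the clause ``up to a transposition'' is accounted for by reading $(\C,\Span(\C))$ with its horizontal and vertical categories in the roles opposite to the literal one, i.e.\ by working with the monoidal double category $\eD$ whose horizontal category $\eH$ is $\Span(\C)$ and whose vertical category $\eV$ is $\C$ (for $\C=\Set$ this is exactly the transpose of the double category of double categories). Concretely I would describe $\eD$ thus: its objects are those of $\C$; its proarrows $X\todi Y$ are the morphisms of $\C$; its vertical arrows are the spans $X\leftarrow P\to Y$, composed using the pullbacks $\C$ is assumed to have; a square with proarrow boundary $(h,h')$ and span boundary $(s,t)$ is a morphism from the apex of $s$ to the apex of $t$ making the two evident diagrams in $\C$ commute; and every layer carries the pointwise $\otimes_\C$. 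Two features will be used repeatedly: the globular horizontal $2$‑cells of $\eD$ are span morphisms, so a lax functor into $\eH=\Span(\C)$ is genuinely lax; and $\eV(\1,A)=\C(\1,A)$ is an ordinary hom, so in an $\eD$‑enrichment the data $U^\D,\odot^\D$ are plain $\C$‑morphisms while $\E$ on morphisms produces spans.

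Next I would expand an externally $\C$‑enriched category as a category object in $\C\text{-}\Cat$: $\C$‑categories $\V,\A$, $\C$‑functors $S,T:\A\to\V$ and $U:\V\to\A$ with $U;S=U;T=\mathrm{id}$, a $\C$‑functor $\odot:T\vee S\to\A$ over the pullback of $\C$‑categories (whose hom‑objects are the $\C$‑pullbacks $\A(f_1,f_2)\times_{\V(Tf_1,Tf_2)}\A(g_1,g_2)$), plus the $\lambda,\rho,\alpha$ coherences. The two points to carry forward: on hom‑objects $\langle S,T\rangle$ gives, for objects $f:X\to X'$, $g:Y\to Y'$ of $\A$, a span $\V(X,Y)\xleftarrow{S}\A(f,g)\xrightarrow{T}\V(X',Y')$ in $\C$; and $\odot$ restricted to hom‑objects is the composition morphism of the $\C$‑category $\A$.

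The dictionary then goes as follows. From an external $\C$‑enriched category I would build $\D$ by taking its vertical category to be the plain category with the objects of $\V$ as objects and the objects of $\A$ as morphisms (source/target $S,T$, composition $\odot$, units $U$); setting $\E(X,Y):=\V(X,Y)$ and $\E(f,g):=\bigl(\V(X,Y)\xleftarrow{S}\A(f,g)\xrightarrow{T}\V(X',Y')\bigr)$; letting the laxness cells $I^\E$, $C^\D$ be, read across these spans, the identities and the composition of the $\C$‑category $\A$ (this is where ``$S,T$ preserve composition'' is used); letting $U^\D_X$, $\odot^\D_{X,Y,Z}$ be the unit and composition of the $\C$‑category $\V$; and letting the functoriality squares $U^\D_f$, $\odot^\D_{f,g,h}$ be the unit and composition of the $\C$‑category $\A$ (their being genuine squares of $\eD$ is precisely that $S$ and $T$ are $\C$‑functors). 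Conversely, from an internally $(\C,\Span(\C))$‑enriched $\D$ I would recover $\V$ as the $\C$‑category $\H$ it already carries, $\A$ as the $\C$‑category whose objects are the morphisms of $\D$'s vertical category, whose hom‑objects are the apexes of the $\E$‑spans, and whose unit and composition are $U^\D_f$ and $\odot^\D_{f,g,h}$; read $S,T$ off the legs of the $\E$‑spans and $U,\odot$ off $U^\D,\odot^\D$; the identity $U;S=U;T=\mathrm{id}$ and the internal‑category laws fall out of the lax‑functoriality equalities, and $\odot$ factors through $T\vee S$ by the ``distribution'' axiom.

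The hard part will be the coherence bookkeeping, concentrated in three spots. First, recognising the $\C$‑categorical pullback $T\vee S$ inside the $\otimes$‑phrased axioms: the apex of a tensor of two $\E$‑spans is exactly the pullback computing a hom‑object of $T\vee S$, so the ``tensorial structure'' and ``distribution'' axioms must be shown equivalent to associativity and unitality of $\A$ together with $\odot$ being a $\C$‑functor on that pullback and the interchange law. Second, matching the $\lambda,\rho,\alpha$ data and coherence diagrams of $\int\D$ with the unit and associativity isomorphisms of the category object. Third, keeping the transposition straight: applying the Grothendieck‑construction theorem to $\D$ should give back the transpose of the non‑enriched double category underlying the external enrichment, and the case $\C=\Set$ — where this is literally the transpose of double categories — fixes the direction. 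Finally I would upgrade the object‑level bijection to an equivalence of $2$‑categories by transporting $\C$‑enriched functors and $\C$‑enriched natural transformations of category objects across the dictionary and checking (pseudo)functoriality.
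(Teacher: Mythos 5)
The paper leaves this lemma's proof as a placeholder, so there is nothing to compare against; I can only assess your plan on its own terms, and its skeleton is the right one: it is the evident $\C$-enriched analogue of the paper's $(\Span,\Set)$ example, and your resolution of ``up to a transposition'' (taking $\eH=\Span(\C)$, $\eV=\C$, and letting the \emph{horizontal} category of the external double category become the plain vertical category of the internal one, while the $\C$-valued vertical homs $\V(X,Y)$ become the objects $\E(X,Y)$) is the only reading under which the data line up, since $U^\D_X\in\eV(\1,\E(X,X))$ and $\odot^\D$ must land in plain $\C$-morphisms while $\E(f,g)$ must carry the two projections $S,T$ of a hom-object of squares.

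One identification in your dictionary is off, and it is worth fixing because it is where the two different compositions of the category object live. For $f:X\todi X'$, $g:Y\todi Y'$ objects of $\A$, the composite span $\E(f,g)\odot\E(f',g')$ has apex the pullback $\A(f,g)\times_{\V(X',Y')}\A(f',g')$, i.e.\ a hom-object of $T\vee S$; hence the laxness cell $C^\D_{f,g,f',g'}$ must be the hom-action of the $\C$-functor $\odot:T\vee S\to\A$ (horizontal composition of squares), and $I^\E_{X,Y}$ the hom-action $\V(X,Y)\to\A(U_X,U_Y)$ of the $\C$-functor $U$ --- not, as you write, the identities and composition of the $\C$-category $\A$. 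The latter (vertical composition of squares $\A(f,g)\otimes\A(g,h)\to\A(f,h)$, commuting with the span legs precisely because $S,T$ are $\C$-functors) is what you correctly assign to $\odot^\D_{f,g,h}$; assigning it to $C^\D$ as well would collapse the two compositions. Beyond that, two points you flag but do not discharge remain genuinely open: the paper's internal-enrichment axioms are stated as strict equalities whereas the external notion carries the isomorphisms $\lambda,\rho,\alpha$, so ``exactly'' needs either a strictness hypothesis or a pseudo variant of the definition; and the ``distribution'' axiom requires the pointwise $\otimes_\C$ on $\Span(\C)$ to commute with the pullbacks computing span composition, an assumption on $\C$ you should state explicitly.
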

\begin{proof}
\todo[inline]{Todo...}
\end{proof}

\section{Enriched double functor}

When considering enriched double functors, there seems to be a choice to make : should we associate the fibers of the functor to horizontal morphisms or to a vertical morphisms of $\eD$ ? Since we do not make such an arbitrary choice, we will take a third option : considering that $\eD$ is a triple category and use ``depth'' morphisms. This strictly include both of the other choices by having the ``depth''-category equating the vertical or horizontal categories.

In fact we claim that such an approach would make sens for any lax (normal) functor : it is always free to consider that the enrichment of the functor and those of the categories can follow different dimensions of a double category (or any tuple category if one want to work with different kinds of functors).

\begin{definition}
  Let $\eD =(\eA,\eV,\eH,\mathcal P)$ be a monoidal triple category, $\eD^{v,h}:=(\eA^{v,h},\eV,\eH)$ its (x,y)-plan.

  Let $\D_1=(\E_1,\V_1,\H_1,U^{\D_1},\odot^{\D_1})$ and $\D_2=(\E_2,\V_2,\H_2,U^{\D_2},\odot^{\D_2})$ two $\eD^{v,h}$-enriched double categories.

  A $\eD$-enriched functor $F : \D_1\rightarrow \D_2$ is given by :
  \begin{itemize}
  \item a functor on vertical categories :
    $$ F_v : \V_1 \rightarrow \V_2 $$
  \item A functor for the arrow category :
    $$ F_a : \V_1\times\V_1 \rightarrow \eA^{\mathcal P,\eH} $$
    agreeing with each other :
    $$ F_a;S^{\eD^{\mathcal P,\eH}}=\E_1 \quad\quad F_a;T^{\eD^{\mathcal P,\eH}} =(F_v\times F_v);\E_2 $$
    i.e. :\vspace{-2em}
    \begin{center}
      \begin{tikzpicture}
        \node (X) at (0,2) {$\E_1(X,Y)$};
        \node (Y) at (4,2) {$\E_2(FX,FY)$};
        \node (U) at (0,0) {$\E_1(X',Y')$};
        \node (V) at (4,0) {$\E_2(FX',FY')$};
        \draw[->] (X) to node [sloped] {$\circ$} node[auto] {{\scriptsize $F(X,X)$}}  (Y);
        \draw[->] (X) to node [sloped] {$\shortmid$} node[left] {{\scriptsize $\E_1(f,g)$}}   (U);
        \draw[->] (Y) to node [sloped] {$\shortmid$} node[auto] {{\scriptsize $\E_2(Ff,Fg)$}}  (V);
        \draw[->] (U) to node [sloped] {$\circ$} node[below] {{\scriptsize $F(X',Y')$}} (V);
        \node () at (2,1) {$F(f,g)$};
      \end{tikzpicture}
    \end{center}
    and with the laxness of $\E_1$ and $\E_2$ :
    \begin{center}
      \begin{tikzpicture}
        \node (X) at (0,5) {$\E_1(X_1,Y_1)$};
        \node (X2) at (3.5,5) {$\E_1(X_2,Y_2)$};
        \node (Y) at (7,5) {$\E_1(X_3,Y_3)$};
        \node (U) at (0,3) {$\E_1(X_1,Y_1)$};
        \node (V) at (7,3) {$\E_1(X_3,Y_3)$};
        \node (X') at (3,2) {$\E_2(FX_1,FY_1)$};
        \node (X2') at (6.5,2) {$\E_2(FX_2,FY_2)$};
        \node (Y') at (10,2) {$\E_2(FX_3,FY_3)$};
        \node (U') at (3,0) {$\E_2(FX_1,FY_1)$};
        \node (V') at (10,0) {$\E_2(FX_3,FY_3)$};
        \draw[->] (X) to node (i1) [sloped] {$\shortmid$} node[auto] {{\scriptsize $\E_1(f,g)$}} (X2);
        \draw[->] (X2) to node (i5) [sloped] {$\shortmid$} node[auto] {{\scriptsize $\E_1(f',g')$}} (Y);
        \draw[double] (X) to  (U);
        \draw[double] (Y) to  (V);
        \draw[->] (U) to node (j1) [sloped] {$\shortmid$} node[below] {{\scriptsize $\E_1(f;f',g;g')$}} (V);
        \draw[->,dashed] (X') to node (i2) [sloped] {$\shortmid$} node[auto] {{\scriptsize $\E_2(Ff,Fg)$}} (X2');
        \draw[->,dashed] (X2') to node (j5) [sloped] {$\shortmid$} node[auto] {{\scriptsize $\E_2(Ff',Fg')$}} (Y');
        \draw[double,dashed] (X') to  (U');
        \draw[double] (Y') to  (V');
        \draw[->] (U') to node (j2) [sloped] {$\shortmid$} node[below] {{\scriptsize $\E_2(F(f;f'),F(g;g'))$}} (V');
        \draw[->,dashed] (X) to node [sloped] (i3) {$\circ$} node[auto] {{\scriptsize $F(X_1,Y_1)$}} (X');
        \draw[->,dashed] (X2) to node [sloped] {$\circ$} node[auto] {{\scriptsize $F(X_2,Y_2)$}} (X2');
        \draw[->] (Y) to node [sloped] (i4) {$\circ$}  node[auto] {{\scriptsize $F(X_3,Y_3)$}} (Y');
        \draw[->] (U) to node [sloped] (j3) {$\circ$} node[below left] {{\scriptsize $F(X_1,Y_1)$}}  (U');
        \draw[->] (V) to node [sloped] (j4) {$\circ$} node[below left] {{\scriptsize $F(X_3,Y_3)$}}  (V');
        \draw[draw=none,red] (X2) to node {$C^{\D_1}_{f,g,f',g'}$}  (j1);
        \draw[draw=none,red!60] (X2') to node {$C^{\D_2}_{Ff,Fg,Ff',Fg'}$}  (j2);
        \draw[draw=none,red] (j1) to node {$F(f;f',g;g')$} (j2);
        \draw[draw=none,red!60] (i3) to node {$U^{\eD^{H,P}}_{F(X_1,Y_1)}$} (j3);
        \draw[draw=none,red] (i4) to node {$U^{\eD^{H,P}}_{F(X_3,Y_3)}$} (j4);
        \draw[draw=none,red!60] (i1) to node {$F(f,g)$} (i2);
        \draw[draw=none,red!60] (i5) to node {$F(f',g')$} (j5);
      \end{tikzpicture}
    \end{center}
  \item A unit functor :
    $$ U^F : \V_1 \rightarrow \mathcal D^{\eH,\mathcal P,\eV} $$
    agreeing with the two others :
    $$ U^F:S^P = U^{\D_1}\quad U^F;T = F_v;U^{E_2} \quad U^F;S^V = id_{id_\1} \quad U^F;T^V=F_a$$
    i.e., $U^F_f$ is the cube :
    \begin{center}
      \begin{tikzpicture}
        \node (X) at (0,4.5) {$\1$};
        \node (Y) at (4,4.5) {$\1$};
        \node (U) at (0,2.5) {$\E_1(X,X)$};
        \node (V) at (4,2.5) {$\E_1(X',X')$};
        \node (X') at (3,2) {$\1$};
        \node (Y') at (7,2) {$\1$};
        \node (U') at (3,0) {$\E_2(FX,FX)$};
        \node (V') at (7,0) {$\E_2(FX',FX')$};
        \draw[double] (X) to node (i1) [sloped] {$\shortmid$}  (Y);
        \draw[->] (X) to node[left] {{\scriptsize $U_X^{\D_1}$}}   (U);
        \draw[->] (Y) to node[auto] {{\scriptsize $U_{X'}^{\D_1}$}}  (V);
        \draw[->] (U) to node (j1) [sloped] {$\shortmid$} node[below] {{\scriptsize $\E_1(f,f)$}} (V);
        \draw[double,dashed] (X') to node (i2) [sloped] {$\shortmid$}  (Y');
        \draw[->,dashed] (X') to node[left] {{\scriptsize $U_{FX}^{\D_2}$}}   (U');
        \draw[->] (Y') to node[auto] {{\scriptsize $U_{FX'}^{\D_2}$}}  (V');
        \draw[->] (U') to node [sloped] (j2) {$\shortmid$} node[below] {{\scriptsize $\E_2(Ff,Ff)$}} (V');
        \draw[double,dashed] (X) to node [sloped] (i3) {$\circ$}  (X');
        \draw[double] (Y) to node [sloped] (i4) {$\circ$}  (Y');
        \draw[->] (U) to node [sloped] (j3) {$\circ$} node[below left] {{\scriptsize $F(X,X)$}}  (U');
        \draw[->] (V) to node [sloped] (j4) {$\circ$} node[auto] {{\scriptsize $F(X',X')$}}  (V');
        \draw[draw=none,red] (i1) to node {$U^{\D_1}_{f}$}  (j1);
        \draw[draw=none,red!60] (i2) to node {$U^{\D_2}_{Ff}$} (j2);
        \draw[draw=none,red] (j1) to node {$F(f,f)$} (j2);
        \draw[draw=none,red!60] (i3) to node {$U^F_X$} (j3);
        \draw[draw=none,red] (i4) to node {$U^F_{X'}$} (j4);
      \end{tikzpicture}
    \end{center}
  \item A composition functor :
    $$ \odot^F : \V_1\times\V_1\times\V_1 \rightarrow \mathcal D^{\eH,\mathcal P,\eV} $$
    agreeing with the two others :
    $$ \odot^F;S^P = \odot^{\D_1}\quad \odot^F;T = F_v;\odot^{E_2} \quad U^F;S^V =c_2;(F_a\times F_a) \quad U^F;T^V=p_{1,3};F_a$$
    where $c_2(X,Y,Z)=((X,Y),(Y,Z))$ and $p_2(X,Y,Z)=(X,Z)$ are functors copying and erasing the second argument.\\
    In diagram, this give the cubes $\odot^F_{f,g,h}$  :
    \begin{center}
      \begin{tikzpicture}
        \node (X) at (0,5) {$\E_1(X,Y)\otimes\E_1(X,Z)$};
        \node (Y) at (6,5) {$\E_1(X',Y')\otimes\E_1(X',Z')$};
        \node (U) at (0,3) {$\E_1(X,Z)$};
        \node (V) at (6,3) {$\E_1(X',Z')$};
        \node (X') at (5,2) {$\E_2(FX,FY)\otimes\E_2(FX,FZ)$};
        \node (Y') at (11,2) {$\E_2(FX',FY')\otimes\E_2(FX',FZ')$};
        \node (U') at (5,0) {$\E_2(FX,FZ)$};
        \node (V') at (11,0) {$\E_2(FX',FZ')$};
        \draw[->] (X) to node (i1) [sloped] {$\shortmid$} node[above] {{\scriptsize $\E_1(f,g)\otimes\E_1(g,h)$}} (Y);
        \draw[->] (X) to node[left] {{\scriptsize $\odot_{X,Y,Z}^{\D_1}$}}   (U);
        \draw[->] (Y) to node[auto] {{\scriptsize $\odot_{X',Y',Z'}^{\D_1}$}}  (V);
        \draw[->] (U) to node (j1) [sloped] {$\shortmid$} node[below] {{\scriptsize $\E_1(f,h)$}} (V);
        \draw[->,dashed] (X') to node (i2) [sloped] {$\shortmid$} node[above] {{\scriptsize $\E_2(Ff,Fg)\otimes\E_2(Fg,Fh)$}} (Y');
        \draw[->,dashed] (X') to node[left] {{\scriptsize $\odot_{FX,FY,FZ}^{\D_2}$}}   (U');
        \draw[->] (Y') to node[auto] {{\scriptsize $\odot_{FX',FY',FZ'}^{\D_2}$}}  (V');
        \draw[->] (U') to node [sloped] (j2) {$\shortmid$} node[below] {{\scriptsize $\E_2(Ff,Fh)$}} (V');
        \draw[->,dashed] (X) to node [sloped] (i3) {$\circ$}  node[left] {{\scriptsize $F(X,Y)\otimes F(Y,Z)$}} (X');
        \draw[->] (Y) to node [sloped] (i4) {$\circ$} node[auto] {{\scriptsize $F(X',Y')\otimes F(Y',Z')$}} (Y');
        \draw[->] (U) to node [sloped] (j3) {$\circ$} node[below left] {{\scriptsize $F(X,Z)$}}  (U');
        \draw[->] (V) to node [sloped] (j4) {$\circ$} node[auto] {{\scriptsize $F(X',Z')$}}  (V');
        \draw[draw=none,red] (i1) to node {$\odot^{\D_1}_{f,g,h}$}  (j1);
        \draw[draw=none,red!60] (i2) to node {$\odot^{\D_2}_{Ff,Fg,Fh}$} (j2);
        \draw[draw=none,red] (j1) to node {$F(f,h)$} (j2);
        \draw[draw=none,red!60] (i1) to node {$F(f,g)\otimes F(g,h)$} (i2);
        \draw[draw=none,red!60] (i3) to node {$\odot^F_{X,Y,Z}$} (j3);
        \draw[draw=none,red] (i4) to node {$\odot^F_{X',Y',Z'}$} (j4);
      \end{tikzpicture}
    \end{center}
  \item with expected coherence axioms obtained by adding a dimensions to the lax functorial equalities of $U$ and $\odot$.
  \end{itemize}
\end{definition}

\section{Externally enriching internally enriched double categories}

\begin{definition}\ \\
  Let $\mathfrak D=(\mathfrak A,\mathfrak H,\mathfrak V,U^{\mathfrak D},\odot^{\mathfrak D})$ a monoidal double category and $\eD =(\eE,\eH,\eV,U^\eE,\odot^\eE)$ be a monoidal $\mathfrak D$-enriched double category.

  A  $(\mathfrak D,\eD)$-enriched double category $\D$ is given by :
  \begin{itemize}
  \item a vertical $\mathfrak V$-enriched category $\V$
  \item a lax $\mathfrak V$-enriched functor :
    $$ \E : \V\times\V \rightarrow \eH $$
  \end{itemize}
  \todo[inline]{todo}
\end{definition}

\end{document}